\def\@author#1{\g@addto@macro\elsauthors{\normalsize%
		\def\baselinestretch{1}%
		\upshape\authorsep#1\unskip\textsuperscript{%
			\ifx\@fnmark\@empty\else\unskip\sep\@fnmark\let\sep=,\fi
			\ifx\@corref\@empty\else\unskip\sep\@corref\let\sep=,\fi
		}%
		\def\authorsep{\unskip,\speace}%
		\global\let\@fnmark\@empty
		\global\let\@corref\@empty  
		\global\let\sep\@empty}%
	\@eadauthor={#1}
}
\def\dj{d\kern-0.4em\char"16\kern-0.1em}
\newtheorem{theorem}{Theorem}[section]
\newtheorem{lemma}[theorem]{Lemma}
\newtheorem{corollary}[theorem]{Corollary}
\theoremstyle{definition}
\newtheorem{example}[theorem]{Example}
\newtheorem{remark}[theorem]{Remark}
\newcommand{\ba}{\begin{array}}
\newcommand{\ea}{\end{array}}
\newcommand{\bc}{\begin{center}}
\newcommand{\ec}{\end{center}}
\journal{the journal.}
\begin{document}

\begin{frontmatter}

\title{\bf Laplacian eigenvalues and eigenspaces of cographs generated by  finite sequence}

\author{Santanu Mandal}\corref{ca}
\ead{santanu.vumath@gmail.com}
\address{Department of Mathematics,
National Institute of Technology Rourkela,
Rourkela - 769008, India} \cortext[ca]{Corresponding author.}

\author{Ranjit Mehatari}
\ead{ranjitmehatari@gmail.com, mehatarir@nitrkl.ac.in}
\address{Department of Mathematics,
	National Institute of Technology Rourkela,
	Rourkela - 769008, India}

\author{Zoran Stani\' c}
\ead{zstanic@matf.bg.ac.rs}
\address{Faculty of Mathematics, University of Belgrade, 
	Studentski trg 16, 11 000 Belgrade, Serbia}

\begin{abstract} 
	In this paper we consider  particular  graphs defined by $\overline{\overline{\overline{K_{\alpha_1}}\cup K_{\alpha_2}}\cup\cdots \cup K_{\alpha_k}}$, where $k$ is even, $K_\alpha$ is a complete graph on $\alpha$ vertices, $\cup$ stands for the disjoint union and an overline denotes the complementary graph. These graphs do not contain the $4$-vertex path as an induced subgraph, i.e., they belong to the class of cographs. In addition, they are iteratively constructed from the generating sequence $(\alpha_1, \alpha_2, \ldots, \alpha_k)$. Our primary question is what invariants or graph properties can be deduced form a given sequence. In this context, we compute  the Lapacian eigenvalues and the corresponding eigenspaces, and derive a lower and an upper bound  for the number of distinct Laplacian eigenvalues. We also determine the graphs under consideration with a fixed number of vertices that either minimize or maximize the algebraic connectivity (that is the second smallest Laplacian eigenvalue). The   clique number is computed in terms of a generating sequence and a relationship between it and the algebraic connectivity is established.
\end{abstract}

\begin{keyword} Cograph\sep Laplacian spectrum \sep algebraic connectivity \sep clique number.

\MSC[2020] 05C50
\end{keyword}

\end{frontmatter}

\section{Introduction}

Throughout the paper, all graphs are assumed to be finite, undirected and without loops or multiple edges. Cographs were introduced in  1960's \cite{Kel}, and this class has been rediscovered independently by several authors in many
equivalent ways since then. They are intensively studied in the domain of structural considerations, spectral properties and applications. A short background is given in the next section. A \textit{cograph} is usually defined as a $P_4$-free graph, i.e., a graph that does not contain the 4-vertex path as an induced subgraph. It is also known that the class of cographs is closed under taking disjoint unions or complementation, and therefore an alternative definition says that an isolated vertex is a cograph, and if $G$ and $H$ are cographs, then their disjoint union $G\cup H$ is a cograph  and their join $\overline{\overline{G}\cup\overline{H}}$ is also a cograph \cite{Corneil 1}; as usual, an overline designates the complementary graph.  

\begin{figure}
	\includegraphics[width=\textwidth]{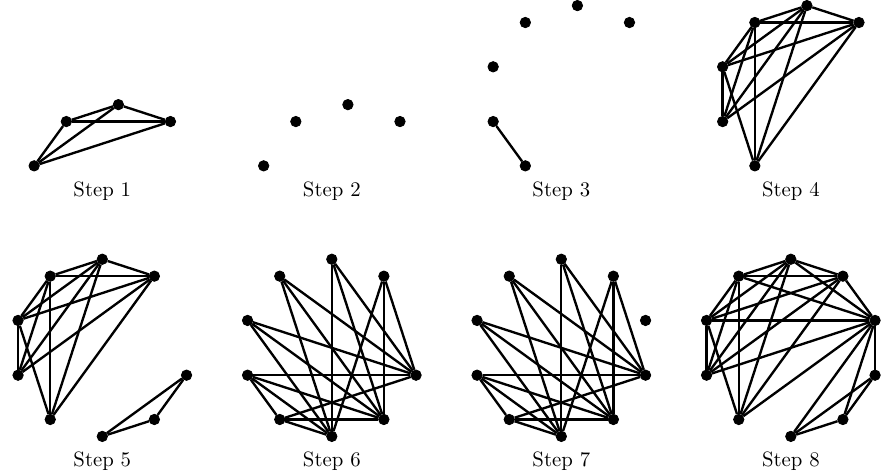}
	\caption{Construction of the $\mathcal{C}$-graph $C(4,2,3,1)$}
	\label{Cgraph_fig1}
\end{figure}  

Particular cographs considered in this study are defined in the following way (and the fact that they are cographs follows from the mentioned equivalent definitions). Let $K_\alpha$ denote the complete graph on $\alpha$ vertices. For positive integers  $\alpha_1,\alpha_2,\ldots,\alpha_k$,  $C(\alpha_1,\alpha_2,\ldots,\alpha_k)$ denotes the cograph defined recursively by
$$\left\{\begin{array}{l}
	C(\alpha_1)=\overline{K}_{\alpha_1}, \\	C(\alpha_1,\alpha_2,\ldots,\alpha_i)=\overline{C(\alpha_1,\alpha_2,\ldots,\alpha_{i-1})\cup K_{\alpha_i}},~ \text{for}~ 2\leq i\leq k.
\end{array}\right.$$
Simultaneously, $(\alpha_1, \alpha_2, \ldots, \alpha_k)$ is referred to as the \textit{generating sequence} of the corresponding cograph. In simple words, to construct  $C(\alpha_1,\alpha_2,\ldots,\alpha_k)$, we begin with $\overline{K}_{\alpha_1}$. In the next step, we take the disjoint union of $K_{\alpha_2}$ and the graph obtained in the first step, and then take the complementary graph. Proceeding in this way, we finally take the disjoint union of $K_{\alpha_k}$ and the graph obtained in the $(k-1)$th step, and finalize the construction by another complementation. Equivalently, $$C(\alpha_1,\alpha_2,\ldots,\alpha_k)\cong \overline{\overline{\overline{K_{\alpha_1}}\cup K_{\alpha_2}}\cup\cdots \cup K_{\alpha_k}}.$$  
 A construction of the 10-vertex cograph $C(4,2,3,1)$ is illustrated in  Fig.~\ref{Cgraph_fig1}. Let $\mathcal{C}$ denote the class of cographs constructed in  above way. For $G\in \mathcal{C}$, we simply say that $G$ is a \textit{$\mathcal{C}$-graph}. In particular, $\mathcal{C}_{even}$ denotes the $\mathcal{C}$-graphs that are generated by an even sequence. Accordingly, they are called \textit{$\mathcal{C}_{even}$-graphs}. In the entire paper our focus is on this particular class, so $k$ is assumed to be even.

To explain our motivation, we recall that a \textit{threshold graph} is a $\{P_4,2K_2, C_4\}$-free graph, i.e., a particular cograph without an induced subgraph isomorphic to either two parallel edges or the 4-vertex cycle. It is known that every threshold graph is generated by  a finite binary sequence \cite{Aguilar2,Andj,Bapat}. Moreover, the same holds for their bipartite counterparts known as chain graphs \cite{Ala,Mandal2}, not defined here. In this context, an experienced reader will surely recall the every $n$-vertex tree (even more, a labelled $n$-vertex tree) is generated by a unique sequence of $n-2$ numbers called the Pr\"{u}fer sequence~\cite{Pru}. And, of course, there are other graphs that are uniquely generated by a finite sequence in a similar way. This approach appears to be very convenient since the entire graph is fully determined by a simple sequence; instead of `a sequence', one may also say `a vector' or `a string'.  Moreover, a generating sequence provides information about many structural and spectral parameters.

In contrast to  the aforementioned graph classes, a representation of a $\mathcal{C}$-graph by a finite sequence may or may not be unique; in other words, it may occur that different sequences are associated with the same graph; for example, $C(1, 2, 2)$ and $C(1,1,1,2)$ are isomorphic. However, according to \cite{MM}, if $G\in \mathcal{C}_{even}$, then there is a unique even sequence $(\alpha_1, \alpha_2, \ldots, \alpha_{2k})$ such that $G\cong C(\alpha_1,\alpha_2,\ldots,\alpha_{2k})$. In this paper, we investigate the invariants that can be deduced form the generating sequence of a $\mathcal{C}_{even}$-graph. Here is the outline of the results established in the forthcoming sections. 

To give a clear insight into the class $\mathcal{C}$ and the subclass $\mathcal{C}_{even}$, we first give some data and comparisons with certain related graph classes.   

If $A$  is the standard adjacency matrix of a graph $G$ and $D$ is the diagonal matrix of vertex degrees, then $L=D-A$ is the \textit{Laplacian matrix} of $G$. Its eigenvalues, spectrum and eigenvectors are known as the \textit{Laplacian eigenvalues}, the \textit{Laplacian spectrum} and the \textit{Laplacian eigenvectors} of $G$. In particular, the second smallest Laplacian eigenvalue $a(G)$ is called the \textit{algebraic connectivity} of $G$. In this paper,  we establish a recurrence formula that computes the Laplacian eigenvalues and the Laplacian eigenvectors of a $\mathcal{C}$-graph in terms of its generating sequence. We also give a lower bound and an upper bound for the number of distinct Laplacian eigenvalues, and for each bound we construct $\mathcal{C}_{even}$-graphs that attain it. 

We consider  $\mathcal{C}_{even}$-graphs with a fixed number of vertices that either maximize or minimize the algebraic connectivity. It occurs that this  invariant is maximized by the complete graph and minimized by the star. In the next natural step, we determine the maximizers and the minimizers within the class of $\mathcal{C}_{even}$-graphs excluding complete graphs and stars. 

A \textit{clique} in a graph is a set of vertices that are all adjacent to each other. The size of a maximum clique of a graph $G$ is known as a \textit{clique number}, denoted by $\omega(G)$. We give an explicit formula for the clique number of  $G\in\mathcal{C}_{even}$, and determine whether $\omega(G)$ is less than, equal to, or greater than $a(G)$; it appears that the last number $\alpha_k$ of the corresponding generating sequence is sufficient to establish this comparison.

Concerning related works, cographs have received a great deal of attention in the last six decades. Some notable results that are related to our results are obtained in \cite{Merris3} (where Merris proved that the Laplacian spectrum of every cograph consists entirely of integers), \cite{Tura6} (where Lazzarin et al.~proved that no two non-isomorphic equivalent cographs share the same Laplacian spectrum), \cite{Tara} (where Abrishami proved that for every non-complete cograph the algebraic connectivity and the vertex connectivity are equal), and \cite{Tura 3,Tura 4,Stanic,Jacobs 2,Trevisan 1,Royle} (where different authors have established many results concerning spectral properties of cographs and related graphs). Many results concerning lower and upper bounds for the algebraic connectivity can be found in \cite[Sections~6.6--6.9]{ifge}. Since, in case of cographs, this invariant coincides with the vertex connectivity, our results also relate the results concerning the bounds for the latter invariant, and some of them can be found in~\cite{Harary,Kirkland,Lucas}. In a classical paper \cite{Karp} Karp proved the NP-completeness of
21 combinatorial problems, one of them is a computing the maximal clique. Since then, this problem has been considered for many graph classes, and some results can be found in \cite{Godsil,Pecher} and references therein.

 Section \ref{sec:back} contains data about $\mathcal{C}$-graphs and some preliminary results concerning their Laplacian matrix. In Section \ref{sec:eande} we deal with the Laplacian eigenvalues and the corresponding eigenspaces. A range for the number of distinct Laplacian eigenvalues is given in Section~\ref{sec:diste}. The graphs $G\in \mathcal{C}_{even}$ that maximize or minimize $a(G)$ are considered in Section~\ref{sec:conn}. Section~\ref{sec:clique} is reserved for the clique number of a $\mathcal{C}_{even}$-graph.

\section{On $\mathcal{C}$-graphs}\label{sec:back}

 By a computer search, we found more than 1000 $\mathcal{C}$-graphs with 12 vertices and more than 8.388.600 $\mathcal{C}$-graphs  with 25 vertices. Observe that  $C(n-1,1)$ is the complete graph $K_n$, while $C(p-1,1,q)$ is the complete bipartite graph $K_{p,q}$. The following cographs are also  categorised as $\mathcal{C}$-graphs.
\begin{itemize}
	\item
	A \textit{split graph} is a graph whose vertex set admits a partition into a clique and a co-clique. A complete split graph, studied in \cite{KCD1}, is a split graph in which every vertex of the co-clique  is adjacent to every vertex in the clique. We observe that every complete split graph is a $\mathcal{C}$-graph represented by  $C(\alpha_1,\alpha_2)$.  However, the class of $\mathcal{C}$-graphs does not include all split graphs.
	\item
	An \textit{antiregular graph} is a connected graph whose degree sequence has only two repeated entries. Its representation is either $C(1,1,\ldots,1)$ or $C(1,2,1,1,\ldots,1)$.
	
	\item A \textit{chordal graph} is a graph without an induced subgraph isomorphic to the cycle $C_i,~ i\geq4$. Therefore, a cograph is a chordal graph if and only if it is $C_4$-free. Hence, the $\mathcal{C}$-graph $C(\alpha_1,\alpha_2,\ldots,\alpha_{k})$ is chordal whenever $\alpha_{2i}>1$ holds for at most one $i$, where $1\leq i\leq \frac{k}{2}$. We note in passing that a chordal cograph is also known as a quasi-threshold graph.  
\end{itemize}

There is no inclusion between the class $\mathcal{C}$ and the class of threshold graphs. However, concerning binary representations of threshold graphs given in \cite{Aguilar2,Bapat}, one may deduce that for a fixed number of vertices, the number of $\mathcal{C}$-graphs is never less than the number of threshold graphs. We shall skip the details and note that the same holds in comparison to the classes of chain graphs and complete multipartite graphs.

We recall that the eigenvalues of the Laplacian matrix are non-negative, zero is one of them and its multiplicity is equal to the number of components of a graph~\cite[Subsection~1.2.2]{ifge}. We proceed with a particular blocking of the Laplacian matrix. Although our focus is on $\mathcal{C}_{even}$-graphs, the following setting remain valid for $\mathcal{C}$-graphs. Evidently, a generating sequence $(\alpha_1, \alpha_2, \ldots, \alpha_k)$ of a $\mathcal{C}$-graph provides a partition of its vertex set. Moreover, vertices belonging to the same part share the same degree. In this context, we consider the corresponding equitable partition $\pi=\{\pi_{\alpha_1}, \pi_{\alpha_2}, \dots,\pi_{\alpha_k}\}$ such that $|\pi_{\alpha_i}|=\alpha_i$, for $1\leq i\leq k$. Let $d_{\alpha_i}$ denote the degree of a vertex in $\pi_{\alpha_i}$. Then we deduce that

$$d_{\alpha_i}=\begin{cases}
	\alpha_i-1 +\sum_{j~\text{even},~j \geq i+1}\alpha_j&\text{if}~i~\text{is odd},\\  
	\sum_{j=1}^{i-1}\alpha_j +\sum_{\ell~\text{even},~ \ell\geq i+2}\alpha_\ell&\text{if}~i~\text{is even and}~i<k,  \\
	\sum_{j=1}^{i-1}\alpha_j, & \text{if}~i=k.
\end{cases}$$

Accordingly, the Laplacian matrix of  $C(\alpha_1,\alpha_2,\dots,\alpha_k)$ admits the following blocking
\begin{equation}
\label{Lap_eq1}
    L=\begin{bmatrix}
	[(d_{\alpha_1}+1)I-J]&-J &O &-J& \dots & O&-J \\
	-J&d_{\alpha_2}I &O &-J&\dots &O&-J\\
	O&O&[(d_{\alpha_3}+1)I-J]&-J & \dots &O&-J\\
	-J&-J&-J&d_{\alpha_4}I&\dots&O&-J\\
	& & & & \ddots \\
	O&O&O&O&\dots&[(d_{\alpha_{k-1}}+1)I-J]&-J \\
	-J&-J&-J&-J& \dots &-J&d_{\alpha_k}I
\end{bmatrix},
\end{equation}
where $I$ and $J$ denote the identity matrix and the all-1 matrix, respectively.


Consequently the quotient matrix of $L$,  that correspond to  $\pi$, is the $k\times k$  matrix given by 
$$Q_L= \begin{bmatrix}
	[d_{\alpha_1}-(\alpha_1-1)]&-\alpha_2 &0 &-\alpha_4& \dots &0&- \alpha_k \\
	-\alpha_1&d_{\alpha_2}&0 &-\alpha_4& \dots &0&-\alpha_k\\
	0 &0&[d_{\alpha_3}-(\alpha_3-1)]&-\alpha_4 & \dots &0&-\alpha_k\\
	-\alpha_1&-\alpha_2&-\alpha_3&d_{\alpha_4}&\dots&0&-\alpha_k\\
	& & & & \ddots \\
	0&0&0&0&\dots&\alpha_k&-\alpha_k \\
	-\alpha_1&-\alpha_2&-\alpha_3&-\alpha_4& \dots &-\alpha_{k-1}&d_{\alpha_k}
\end{bmatrix}.$$

If $\lambda$ is an eigenvalue of $Q_L$, let $P=[p_{ij}]$ denote the $n\times k$ characteristic matrix for the equitable partition $\pi$, i.e., the $(i,j)$-th entry of $P$ is 
$$p_{ij}=\begin{cases}
	1 &\text{ if }i\in \pi_j,\\ 0 &\text{ otherwise}.
\end{cases}$$
From $LP=PQ_L$, we obtain $L(PX)=\lambda (PX)$, which  implies that every eigenvalue of $Q_L$ is also an eigenvalue of~$L$. 


\section{Eigenvalues and eigenvectors}\label{sec:eande}
We compute the  eigenvalues and the eigenvectors of the Laplacian matrix of a $\mathcal{C}$-graph in terms of a generating sequence. Assume that the eigenvalues of the quotient matrix $Q_L$ are arranged in non-decreasing order as follows
\begin{equation}
	\label{Lap_eq2}
	0=\lambda_1 \leq \lambda_2 \leq \lambda_3 \leq \dots \leq \lambda_k.
\end{equation}

\begin{theorem}
	\label{Lap_th1}
	The  eigenvalues of the quotient matrix $Q_L$ of a $\mathcal{C}_{even}$-graph $C(\alpha_1,\alpha_2,\dots,\alpha_k)$, where $k\geq4$, are $\lambda_1=0$,  $\lambda_k=n$ and
	$$\lambda_i=\begin{cases}
		\lambda_{i-1}+\alpha_{k-2(i-2)}&\text{for }2\leq i\leq \frac{k}{2},\\
		\lambda_{i+1}-\alpha_{2i-(k-1)}&\text{for }k-1\geq i\geq \frac{k}{2}+1.
	\end{cases}$$
\end{theorem}

\begin{proof}
	The all-1 vector $\mathbf{j}$ is associated with $\lambda_1=0$. We now construct the eigenvectors corresponding to the next $\frac{k}{2}-1$ smallest eigenvalues as follows: 
	{$$\mathbf{x}_i(j)=\begin{cases}
		1&\text{ if }1\leq j\leq k-2i+2,\\
		-\dfrac{\sum_{\ell=1}^{k-2i+2}\alpha_\ell}{\alpha_{k-2i+3}}&\text{ if } j=k-2i+3,\\
		0&\text{ otherwise.}
	\end{cases}$$
Indeed, for $i=2$ we have
$$\mathbf{x}_2=\begin{bmatrix}
    1 &1& 1 &\cdots&1&-\dfrac{\sum_{\ell=1}^{k-2}\alpha_\ell}{\alpha_{k-1}}& 0
    \end{bmatrix}^\intercal, \text{ along with }Q_L\mathbf{x}_2=\alpha_k\mathbf{x}_2,
$$
which implies that $\alpha_k$ is an eigenvalue of $Q_L$.\\
Similarly, for $i=3$,
$$\mathbf{x}_3=\begin{bmatrix}
    1 &1& 1 &\cdots&1&-\dfrac{\sum_{\ell=1}^{k-4}\alpha_\ell}{\alpha_{k-3}}& 0&0&0
    \end{bmatrix}^\intercal, \text{ and }Q_L\mathbf{x}_3=(\alpha_k+\alpha_{k-2})\mathbf{x}_3,
$$
which implies that $\alpha_k+\alpha_{k-2}$ is an eigenvalue of $Q_L$.
In general, for $2\leq i\leq\frac{k}{2}$, we obtain
	$$Q_L\mathbf{x}_i=(\alpha_k+\alpha_{k-2}+\cdots+\alpha_{k-2(i-2)})\mathbf{x}_i.$$
So, $\alpha_k+\alpha_{k-2}+\cdots+\alpha_{k-2(i-2)}$ is an eigenvalue of $Q_L$. This establishes the recurrence relation $\lambda_i=\lambda_{i-1}+\alpha_{k-2(i-2)}.$}\medskip
	
	For the remaining eigenvalues, we define  vectors 
	{$$\mathbf{x}_{\frac{k}{2}+i}(j)=\begin{cases}
		1&\text{ if }1\leq j\leq 2i-1,\\
		-\dfrac{\sum_{\ell=1}^{2i-1}\alpha_\ell}{\alpha_{2i}}&\text{ if } j=2i,\\
		0&\text{ otherwise.}
	\end{cases}.$$
 Now, for $i=\frac{k}{2}$, we obtain
 $$\mathbf{x}_k=\begin{bmatrix}
    1 &1& 1 &\cdots&1&-\dfrac{\sum_{\ell=1}^{k-1}\alpha_\ell}{\alpha_{k}}
    \end{bmatrix}^\intercal, \text{ and }Q_L\mathbf{x}_k=\Big(\sum_{\ell=1}^k\alpha_\ell\Big)\mathbf{x}_k=n\mathbf{x}_k,$$
    Therefore, the largest eigenvalue of $Q_L$ is $n$.
    In general, for $1\leq i\leq\frac{k}{2}$, it holds
	$$Q_L\mathbf{x}_{\frac{k}{2}+i}=\Big(\sum_{\ell=1}^{k/2}\alpha_{2\ell}+\sum_{m=1}^i\alpha_{2m-1}\Big) \mathbf{x}_{\frac{k}{2}+i},$$}
	which concludes the proof.
\end{proof}
 The following theorem gives the remaining eigenvalues of $L$.
\begin{theorem}
	
	\label{Lap_th2}
	The remaining $n-k$ eigenvalues of $L$ are 
	$d_{\alpha_{2i}}$ with multiplicity $\alpha_{2i}-1$  and $d_{\alpha_{2i-1}}+1$ with multiplicity $\alpha_{2i-1}-1$, for $i\leq \frac{k}{2}$.
\end{theorem}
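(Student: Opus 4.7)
The plan is to exhibit, for each partition cell $\pi_{\alpha_i}$, a family of $\alpha_i-1$ explicit eigenvectors supported only on that cell, and then verify that together with the $k$ eigenvectors lifted from $Q_L$ in Theorem \ref{Lap_th1} they form a complete eigenbasis of $L$.

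Fix an index $i\in\{1,2,\ldots,k\}$ and let $v\in\mathbb{R}^n$ be any vector whose support is contained in $\pi_{\alpha_i}$ and whose entries sum to zero on that cell. There are $\alpha_i-1$ linearly independent such vectors. First I would examine how $L$ acts on $v$, block row by block row, using the explicit block form of $L$ displayed earlier. For every block row different from the $i$-th, the block meeting column $i$ is either the zero matrix $O$ or the all-ones block $-J$; in either case it annihilates $v$ because $Jv\lvert_{\pi_{\alpha_i}} = (\text{sum of entries})\cdot j = 0$. Hence $Lv$ is supported in $\pi_{\alpha_i}$, and the computation reduces to applying the diagonal block.

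For $i$ even the diagonal block is $d_{\alpha_i}I$, so $Lv=d_{\alpha_i}v$. For $i$ odd the diagonal block is $(d_{\alpha_i}+1)I-J$, and again $Jv\lvert_{\pi_{\alpha_i}}=0$, so $Lv=(d_{\alpha_i}+1)v$. This produces $\alpha_{2i}-1$ eigenvectors for eigenvalue $d_{\alpha_{2i}}$ and $\alpha_{2i-1}-1$ eigenvectors for eigenvalue $d_{\alpha_{2i-1}}+1$, for each admissible $i$, which is exactly the set described in the statement and contributes $\sum_{i=1}^{k}(\alpha_i-1)=n-k$ eigenvectors in total.

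To finish, I would argue that these $n-k$ eigenvectors together with the $k$ vectors $PX_1,PX_2,\ldots,PX_k$ coming from $Q_L$ (via Theorem \ref{Lap_th1}) are linearly independent, which is enough to conclude that the full Laplacian spectrum has been enumerated. Linear independence is immediate from the structural observation that each $PX_j$ is constant on every cell $\pi_{\alpha_i}$, whereas each of the new eigenvectors has zero sum on the single cell where it is supported and vanishes on the others; the two families are therefore orthogonal with respect to the standard inner product restricted cell-by-cell. The count $k+(n-k)=n$ then shows we have a complete eigenbasis, so no other Laplacian eigenvalues exist. The only delicate point I anticipate is bookkeeping the parity split in the diagonal blocks (odd vs. even $i$) to pick up the $+1$ correctly; everything else is a routine block-multiplication check.
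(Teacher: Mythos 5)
Your proposal is correct and follows essentially the same route as the paper: the paper's proof also constructs, for each cell $\pi_{\alpha_i}$, a family of $\alpha_i-1$ zero-sum vectors supported on that cell (the explicit vectors $E_j^{\alpha_i}$), verifies the eigenvalue equation using the block structure of $L$ (picking up $d_{\alpha_{2i}}$ on even cells and $d_{\alpha_{2i-1}}+1$ on odd cells because of the extra $-J$ in the diagonal block), and concludes completeness via orthogonality to the lifted eigenvectors $PX$, which are constant on each cell. Your only departure is working with the abstract $(\alpha_i-1)$-dimensional space of zero-sum vectors rather than a named basis, which changes nothing of substance.
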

\begin{proof}For $\ell>1$, let $\{E_j^\ell\}$ denote the set of orthogonal of $\ell-1$  row-vectors in $\mathbb{R}^\ell$ defined by
	$$E_j^\ell=\textbf{\emph{e}}_1(\ell)+\textbf{\emph{e}}_2(\ell)+\cdots+\textbf{\emph{e}}_{j}(\ell)-j\textbf{\emph{e}}_{j+1}(\ell)\ \text{for all}~j~\text{such that}~ 1\leq j\leq \ell-1,$$
	where $\textbf{\emph{e}}_{j}(\ell)$ is the $j$th row-vector of the canonical basis of $\mathbb{R}^\ell$. \\
	
	Now, for every $\alpha_i\geq 2$, we define
	$$\mathbf{x}_j^{\alpha_i}=[\mathbf{0}_{\alpha_1}\ \mathbf{0}_{\alpha_2}\ \cdots\ \mathbf{0}_{\alpha_{i-1}}\ E_j^{\alpha_i} \ \mathbf{0}_{\alpha_{i+1}}\ \cdots\ \mathbf{0}_{\alpha_k}]^\intercal,\ \  1\leq j \leq \alpha_i-1,\,1\leq i\leq k,$$
	where the  $\mathbf{0}_r$ denotes the all-0 row-vector in $\mathbb{R}^r$.\\	
{Then, for $\alpha_i\geq2$ and $1\leq s \neq t\leq\alpha_i-1$, we have
$$\big(\mathbf{x}_s^{\alpha_i}\big)^\intercal\mathbf{x}_t^{\alpha_i}=E_s^{\alpha_i}\big(E_t^{\alpha_i}\big)^\intercal=0.
$$
Therefore, the set $\{\mathbf{x}_1^{\alpha_i},\mathbf{x}_2^{\alpha_i},\ldots, \mathbf{x}_{\alpha_{2i-1}}^{\alpha_i}\}$ is orthogonal for all $\alpha_i\geq2$. Observe that, in one hand, by~\eqref{Lap_eq1}  the Laplacian  $L$ is a $k\times k$ block matrix whose non-diagonal blocks are constant matrices, while on the other hand, the entry-sum  of  $E_j^{\alpha_i}$ is 0 whenever $\alpha_i\geq2, 1\leq j\leq \alpha_i-1$. Thus, if $ 1\leq i\leq k$, for each $\alpha_{2i}\geq 2$, we obtain  
\begin{align*}
    L\mathbf{x}_j^{\alpha_{2i}}&=[\mathbf{0}_{\alpha_1}\ \mathbf{0}_{\alpha_2}\ \cdots\ \mathbf{0}_{\alpha_{i-1}}\ d_{\alpha_{2i}}E_j^{\alpha_i} \ \mathbf{0}_{\alpha_{i+1}}\ \cdots\ \mathbf{0}_{\alpha_k}]^\intercal=d_{\alpha_{2i}}{\mathbf{x}_j^{\alpha_{2i}}},
\end{align*}
 for all $1\leq j\leq \alpha_{2i}-1.$}
 
	Similarly, for $\alpha_{2i-1}\geq 2$, the vectors $ \mathbf{x}_j^{\alpha_{2i-1}}$ satisfy $$L\mathbf{x}_j^{\alpha_{2i-1}}=(d_{\alpha_{2i-1}}+1){\mathbf{x}_j^{\alpha_{2i-1}}},\ \ \text{ for } 1\leq j\leq \alpha_{2i}-1,$$ 
and this completes the proof.
\end{proof}

We provide more details in a particular case $k=2$. Despite it is simple, this case is illustrative since it computes the eigenvectors according to the previous theorems. Also, it will be used in the sequel.

\begin{example}
\label{remark0}
For $k=2$, the corresponding $\mathcal{C}_{even}$-graph is the complete split graph $C(\alpha_1,\alpha_2)$. The quotient matrix $Q_L$ is
$$Q_L= \begin{bmatrix}
	[d_{\alpha_1}-(\alpha_1-1)]&-\alpha_2 \\
	-\alpha_1&d_{\alpha_2}
\end{bmatrix}=\begin{bmatrix}
	\alpha_2&-\alpha_2 \\
	-\alpha_1&\alpha_1
\end{bmatrix}
$$
Its eigenvalues are $\lambda_1=0$ an  $\lambda_2=\alpha_1+\alpha_2$.  The remaining two eigenvalues of $L$ are $d_{\alpha_2}$ with multiplicity $(\alpha_2-1)$ and $d_{\alpha_1}+1$ with multiplicity $(\alpha_1-1)$. 
In the particular case $\alpha_1=\alpha_2=1$ we deal with a 2-vertex graph with Laplacian eigenvalues 0 and 2. For $\alpha_1,\alpha_2 \geq 2$, let $\mathbf{x}_i$,  $1\leq i\leq (\alpha_2-1)$, and $\mathbf{y}_j$,  $1\leq  j\leq (\alpha_1-1)$, be the eigenvectors associated with $d_{\alpha_2}$ and $d_{\alpha_1}+1$, respectively. Then, 
$$\mathbf{x}_1=[\underbrace{0 ~ 0~\cdots ~0}_\text{$\alpha_1$}  ~1~-1~0~0~\cdots~0]^\intercal, $$

$$\mathbf{x}_2=[\underbrace{0 ~0~ \cdots ~0}_\text{$\alpha_1$}~1~1~-2~ 0~0~ \cdots ~0 ]^\intercal,$$
$$\mathbf{x}_3=[\underbrace{0 ~0~ \cdots ~0}_\text{$\alpha_1$}~1~1~1~-3~ 0~0~ \cdots ~0 ]^\intercal,$$
$~~~~~~~~~~~~~~~~~~~~~~~~~~~~~~~~~~~~~~~~~~~~~~~~~~~~~~~~~~~~~~~~~~~~~~~~~~~\vdots$

$$\mathbf{x}_{\alpha_2-1}=[\underbrace{0 ~0~ \cdots ~0}_\text{$\alpha_1$}~\underbrace{1~1~\cdots~1}_{\text{$\alpha_2-1$}}~-(\alpha_2-1) ]^\intercal.$$
Similarly,

$$\mathbf{y}_1=[1~-1~ 0~0~\cdots ~0 ~\underbrace{0~0~\cdots~0}_\text{$\alpha_2$}]^\intercal, $$

$$\mathbf{y}_2=[1~1~-2~ 0~0~\cdots ~0 ~\underbrace{0~0~\cdots~0}_\text{$\alpha_2$}]^\intercal, $$

$$\mathbf{y}_3=[1~1~1~-3~ 0~0~\cdots ~0 ~\underbrace{0~0~\cdots~0}_\text{$\alpha_2$}]^\intercal, $$

$~~~~~~~~~~~~~~~~~~~~~~~~~~~~~~~~~~~~~~~~~~~~~~~~~~~~~~~~~~~~~~~~~~~~~~~~~~~\vdots$

$$\mathbf{y}_{\alpha_1-1}=[\underbrace{1~1~\cdots ~1}_\text{$\alpha_1-1$}~-(\alpha_1-1)~ \underbrace{0~0~\cdots~0}_\text{$\alpha_2$}]^\intercal. $$
\end{example}


We now provide two straightforward consequences of the previous results. The first one follows directly.
\begin{corollary}\label{cor:simple}
	The matrix $Q_L$ has simple eigenvalues.
\end{corollary}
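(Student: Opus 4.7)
The plan is to argue directly from the explicit recurrence in Theorem \ref{Lap_th1}: since $Q_L$ is a $k\times k$ matrix, it suffices to exhibit $k$ pairwise distinct eigenvalues, and the recurrence furnishes exactly that.

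First I would unfold the recurrence to get closed-form expressions. For $2\le i\le \tfrac{k}{2}$, iterating $\lambda_i=\lambda_{i-1}+\alpha_{k-2(i-2)}$ starting from $\lambda_1=0$ gives
$$\lambda_i=\alpha_k+\alpha_{k-2}+\cdots+\alpha_{k-2(i-2)},$$
a sum of $i-1$ terms taken from the even-indexed $\alpha$'s (read from the top). Similarly, iterating $\lambda_i=\lambda_{i+1}-\alpha_{2i-(k-1)}$ downward from $\lambda_k=n=\sum_{j=1}^{k}\alpha_j$ yields a closed form for $\lambda_{\frac{k}{2}+1},\dots,\lambda_{k-1}$ as $n$ minus a sum of consecutive odd-indexed $\alpha$'s.

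Next I would check strict monotonicity. Because every $\alpha_j$ is a positive integer, the recurrence immediately gives $\lambda_i-\lambda_{i-1}=\alpha_{k-2(i-2)}\ge 1>0$ on the first block $i=2,\dots,\tfrac{k}{2}$, and $\lambda_{i+1}-\lambda_i=\alpha_{2i-(k-1)}\ge 1>0$ on the second block $i=\tfrac{k}{2}+1,\dots,k-1$. The only comparison not covered by the recurrence is the jump across the middle, between $\lambda_{k/2}$ and $\lambda_{k/2+1}$. Using the closed forms, a short computation shows
$$\lambda_{\frac{k}{2}+1}-\lambda_{\frac{k}{2}}=\alpha_1+\alpha_2\ge 2>0,$$
so the two blocks do not overlap.

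Putting the three inequalities together yields $\lambda_1<\lambda_2<\cdots<\lambda_k$, i.e.\ $k$ distinct eigenvalues of the $k\times k$ matrix $Q_L$, whence every eigenvalue is simple. The only place that requires any thought is the middle gap, since the recurrence of Theorem \ref{Lap_th1} does not directly relate $\lambda_{k/2}$ and $\lambda_{k/2+1}$; the rest is immediate from the positivity of the $\alpha_j$'s.
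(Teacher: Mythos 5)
Your argument is correct and is essentially the paper's intended one: the corollary is stated as an immediate consequence of Theorem \ref{Lap_th1}, namely that the recurrence forces strictly increasing gaps $\lambda_i-\lambda_{i-1}=\alpha_j\ge 1$, so the $k\times k$ matrix $Q_L$ has $k$ distinct eigenvalues. Your explicit verification of the middle gap $\lambda_{\frac{k}{2}+1}-\lambda_{\frac{k}{2}}=\alpha_1+\alpha_2>0$ (which indeed follows from the closed forms $\lambda_{\frac{k}{2}}=\alpha_4+\alpha_6+\cdots+\alpha_k$ and $\lambda_{\frac{k}{2}+1}=\alpha_1+\alpha_2+\alpha_4+\cdots+\alpha_k$) is the one detail the paper leaves implicit, and it is checked correctly.
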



\begin{corollary}
	\label{Lap_cor3}
	The algebraic connectivity of a $\mathcal{C}_{even}$-graph $G\cong C(\alpha_1,\alpha_2,\dots,\alpha_k)$ is 
	$$a(G)=\begin{cases}
		\alpha_1&\text{if }k=2~\text{and }~\alpha_2\neq1,\\
                  \alpha_1+\alpha_2 & \text{if }k=2~\text{and}~\alpha_2=1,\\
		\min\{\alpha_k,n-\alpha_k\}&\text{if } k\geq 4.
	\end{cases}
	$$
\end{corollary}
\begin{proof}
	If $k=2$, then  $G$ is a complete  split graph $C(\alpha_1, \alpha_2)$ with Laplacian eigenvalues  $ (\alpha_1+\alpha_2)^{\alpha_1}, \alpha_1^{\alpha_2-1}$ and $0$,
		where exponents stand for the multiplicities. Clearly, the second smallest eigenvalue is $\alpha_1$ when $\alpha_2\neq1$, and $\alpha_1+\alpha_2$ when $\alpha_2=1$.
		
If $k\geq 4$, then, by Theorems~\ref{Lap_th1} and~\ref{Lap_th2}, the second smallest eigenvalue of  $G$  is either $\alpha_k$, or $d_{\alpha_{2i}}$ for some $\alpha_{2i}\geq2$, or $d_{\alpha_{2i-1}}+1$ for some $\alpha_{2i-1}\geq2$. Here we observe that, $d_{\alpha_{2i}}>\alpha_{k}$ for all $1\leq i\leq k-1$ and  $d_{\alpha_{2i-1}}+1>\alpha_k$ for all $1\leq i\leq k$.
Therefore, $a(G)$ is either $\alpha_k$ or $d_{\alpha_{k}}=n-\alpha_k$ with $\alpha_k\neq1$. Now, $d_{\alpha_k}\geq\alpha_k$ gives $\alpha_k\geq  \frac{n}{2}>1$, so in this case $n-\alpha_k$ occurs in the spectrum of $L$. Therefore $a(G)=\min\{\alpha_k,n-\alpha_k\}$, as desired.
\end{proof}

We conclude the section with another  example.

\begin{example}
	Let us consider the $\mathcal{C}_{even}$-graph $C(8,3,4,2,1,5,6,3,7,9)$ with $48$ vertices. The quotient matrix $Q_L$ is the $10 \times 10$ matrix given by
	$$Q_L=\begin{bmatrix}
		22&-3 &0 &-2&0 &-5&0&-3 & 0&-9 \\
		-8&27 &0 &-2&0&-5&0&-3&0&-9\\
		0&0&19&-2 &0 &-5& 0&-3&0&-9\\
		-8&-3&-4&32&0&-5&0&-3&0&-9\\
		0&0&0&0&17&-5&0&-3&0&-9\\
		-8&-3&-4&-2&-1&30&0&-3&0&-9\\
		0&0 &0 &0 &0 &0 &12&-3&0&-9 \\
		-8&-3&-4&-2&-1&-5&-6&38&0&-9\\
		0&0&0&0&0&0&0&0&9&-9 \\
		-8&-3&-4&-2&-1&-5&-6&-3&-7&39
	\end{bmatrix}.$$
	By Theorem \ref{Lap_th1}, the eigenvalues of $Q_L$ are $48, 41, 35, 34, 30, 19, 17, 12, 9, 0$, while the corresponding eigenvectors are (a subscript denotes the eigenvalue)
	$$\mathbf{x}_{48}=[1\ 1\ 1\ 1\ 1\ 1\ 1\ 1\ 1 -\frac{13}{3}]^\intercal  ,~~\mathbf{x}_{0}=[1\ 1\ 1\ 1\ 1\ 1\ 1\ 1\ 1\ 1]^\intercal$$
	$$\mathbf{x}_{41}=[1\ 1\ 1\ 1\ 1\ 1\ 1\ -\frac{29}{3}\ 0\ 0]^\intercal ,~~\mathbf{x}_{9}=[1\ 1\ 1\ 1\ 1\ 1\ 1\ 1\ -\frac{32}{7}\ 0]^\intercal$$
	$$\mathbf{x}_{35}=[1\ 1\ 1\ 1\ 1, -\frac{18}{5} 0\ 0\ 0\ 0]^\intercal ,~~\mathbf{x}_{12}=[1\ 1\ 1\ 1\ 1\ 1\ -\frac{23}{6}\ 0\ 0\ 0]^\intercal$$
	$$\mathbf{x}_{34}=[1\ 1\ 1\ -\frac{15}{2}\ 0\ 0\ 0\ 0\ 0\ 0]^\intercal ,~~\mathbf{x}_{17}=[1\ 1\ 1\ 1\ -17\ 0\ 0\ 0\ 0\ 0]^\intercal $$
	$$ \mathbf{x}_{30}=[1\ -\frac{8}{3}\ 0\ 0\ 0\ 0\ 0\ 0\ 0\ 0]^\intercal ,~~\mathbf{x}_{19}=[1\ 1\ -\frac{11}{4}\ 0\ 0\ 0\ 0\ 0\ 0\ 0]^\intercal.$$
	
	Using Theorem \ref{Lap_th2}, we compute the remaining eigenvalues and their multiplicities: $27^2,32,30^4,38^2,39^8,30^7,23^3,$ $18^5,16^6 $. The corresponding eigenvectors are computed as in Example~\ref{remark0}.
\end{example}

\section{Number of distinct eigenvalues}\label{sec:diste}

This section is devoted to the number of distinct eigenvalues, denoted by $s(G)$, of the Laplacian matrix of a $\mathcal{C}_{even}$-graph  $G$. We start with the following theorem.
\begin{theorem}
	\label{Lap_th3}
	For a $\mathcal{C}_{even}$-graph $C(\alpha_1,\alpha_2,\ldots,\alpha_k)$, 
	\begin{equation}
		\label{Lap_eq3}
		k\leq s(G)\leq 2k-1.
	\end{equation}
\end{theorem}
\begin{proof}
	By Corollary \ref{cor:simple}, the quotient matrix $Q_L$ has exactly $k$ distinct eigenvalues. Thus, it follows that $s(G)\geq k$. 
	
	Theorem \ref{Lap_th2} says that $d_{\alpha_{2i}}$ and $d_{\alpha_{2i-1}}+1$ are the eigenvalues for all $1\leq i\leq \frac{k}{2}$; this gives at most $k$ distinct eigenvalues in the spectrum of $L$. In addition, $Q_L$ has $k$ distinct eigenvalues. Together, we have at most $2k$ distinct eigenvalues. However, we observe that  $(\frac{k}{2}+1)$th eigenvalue of $Q_L$ is always equal to $d_{\alpha_1}+1$. Thus we obtain $s(G)\leq 2k-1$, and this proves the right-hand side of~\eqref{Lap_eq3}.
\end{proof}

In the next two remarks, we will see that the obtained bounds for $s(G)$ are sharp. 

\begin{remark}
	\label{Lap_rem1}
	We observe that the lower bound of (\ref{Lap_eq3}) is attained in each of the following cases:
	\begin{enumerate}
		\item
		Let $G\cong C(\alpha_1,1,1,\ldots,1)$. Here, $Q_L$ has $k$ distinct eigenvalues and $d_{\alpha_1}+1$ is an additional eigenvalue of $L$. However, $d_{\alpha_1}+1$ belongs to the spectrum of $Q_L$, as mentioned in the previous proof. Therefore,  $G$ has exactly $k$ distinct eigenvalues.
		\item Let $G\cong C(1,1,\ldots,1, p, 1)$, with  $2\leq p\leq\frac{k}{2}-2$. Then, by Theorem \ref{Lap_th2}, $d_{\alpha_{k-1}}+1$ is an eigenvalue of $L$, but it equals the $(p+2)$th eigenvalue of $Q_L$. As before, $G$ has exactly $k$ distinct eigenvalues.
		\item Let $G\cong C(\alpha_1,\alpha_2,\alpha_3,\alpha_4)$. First, note that the eigenvalues of $Q_L$ are $0$, $\alpha_4$, $\alpha_1+\alpha_2+\alpha_4=d_{\alpha_1}+1$ and $n$. Thus if $G$ has exactly $4$ distinct eigenvalues, then any eigenvalue of $L$ obtained by Theorem~\ref{Lap_th2} must be equal to either  $\alpha_4$ or $\alpha_1+\alpha_2+\alpha_4$. In this case, $G$ is one of the following (in all cases, $\alpha\geq 1$):
		\begin{enumerate}
			\item
			$G\cong C(\alpha,1,1,1)$
			\item
			$G\cong C(\alpha,1,1+\alpha,1)$
			\item
			$G\cong C(\alpha,1,1,2+\alpha)$
			\item
			$G\cong C(\alpha,1,1+\alpha,1+\alpha)$
			\item
			$G\cong C(\alpha,1,1+\alpha,2+2\alpha)$
		\end{enumerate}
		This item also characterizes all $\mathcal{C}_{even}$-graphs with exactly four distinct eigenvalues.
	\end{enumerate}
\end{remark}

\begin{remark}
	\label{Lap_rem2}
	Here are some $\mathcal{C}_{even}$-graphs attaining the upper bound of (\ref{Lap_eq3}):
	\begin{enumerate}
		\item
		Let $G$ ($\not\cong K_n$) be the complete  split graph $C(\alpha_1, \alpha_2)$. The eigenvalues of $L$ are $$ (\alpha_1+\alpha_2)^{\alpha_1}, \alpha_1^{\alpha_2-1}, 0,$$
		 and so the upper bound of (\ref{Lap_eq3}) is attained. Observe also that this item characterizes all  $\mathcal{C}_{even}$-graphs with exactly three distinct eigenvalues.
		\item
		Let $G\cong C(p,q,p+1,q+1)$, where $p\neq q$ and $q>1$.
		The eigenvalues of $L$ are $$2p+2q+2,\ (p+2q+1)^p,\ q+1,\ 0,\ (p+q+1)^{q-1},\ (2p+q+1)^q,\ (p+q+2)^p, $$
		along with the desired conclusion. 
		\item
		Similarly, for $G\cong C(i,j,r,i+1,j+1,r+1)$, where $j>1$, $i\neq r$ and $r+i\neq j$, 
		 by Theorems~\ref{Lap_th1} and~\ref{Lap_th2}, the  eigenvalues of $L$ are $2i+2j+2r+3,~2i+j+2r+2,~r+i+2,~r+1,~0,~(2i+r+2)^{j-1},~(i+j+2r+1)^i,~(2i+2j+r+2)^r,~(2i+j+r+2)^{i},~(2r+i+2)^{r-1},~(j+r+2)^j$. 
		 Hence, $s(G)=2k-1=11.$
	\end{enumerate}
\end{remark}
Next we consider a particular case of a constant sequence.

\begin{theorem}
	\label{Lap_th4}
	Let $G\cong C(\underbrace{p, p,\ldots, p}_k)$ be a $\mathcal{C}_{even}$-graph, where $p\neq1$. Then
	$s(G)=k+1.$
\end{theorem}
\begin{proof}
	Let $\lambda_1,\lambda_2,\ldots,\lambda_k$ be the eigenvalues of $Q_L$,  arranged as in~\eqref{Lap_eq2}. By Theorem \ref{Lap_th2}, $G$ has $\frac{k}{2}$ eigenvalues of the form $d_{\alpha_{2i}}$ and $\frac{k}{2}$ eigenvalues of the form $d_{\alpha_{2i-1}}+1$. We note the following overlapping between the eigenvalues: 
	$$d_{\alpha_2}=d_{\alpha_3}+1,$$
	$$d_{\alpha_4}=d_{\alpha_1}+1=\lambda_{\frac{k}{2}+1},$$
	$$d_{\alpha_{k-i}}=\lambda_{k-(\frac{i}{2}+1)} , ~\text{for}~i\in\{0,2,4,\ldots, k-4\},$$
	$$d_{\alpha_{k-j}}+1=\lambda_{k-(\frac{3-j}{2}+6)} , ~\text{for}~j\in\{1,3,5,\ldots, k-5\}.$$
	
	Therefore, the eigenvalues of $Q_L$ contribute $k$ to $s(G)$, and the eigenvalues of the form $d_{\alpha_{2i}}$ ($\neq d_{\alpha_2}$) and $d_{\alpha_{2i-1}}+1$ do not contribute anything extra to $s(G)$; however, $d_{\alpha_2}$ contributes one. Hence, $s(G)=k+1$.
\end{proof}

The following table contains a list of some $\mathcal{C}_{even}$-graphs with 24 vertices along with their distinct eigenvalues.

\begin{table}[h]
	\begin{center}
		\begin{tabular}{cccc}
			\hline
			$k$ & $G$& $s(G)$ & distinct eigenvalues\\ \hline
			2 & $C(21,3)$ & $3$ & 0, 21, 24\\
			\hline
			4 & $C(5,1,6,12)$ & 4 & 0, 12, 18, 24 \\
			\hline
			4 & $C(6,6,6,6)$ & 5  & 0, 6, 12, 18, 24\\
			\hline
			4 & $C(10,1,10,3)$ & 6 & 0, 3, 13, 14, 21, 24\\
			\hline
			6 & $C(4,4,4,4,4,4)$ & 7 & 0, 4, 8, 12, 16, 20, 24\\
			\hline
			4 & $C(4,7,5,8)$ & 7 & 0, 8, 12, 13, 16, 19, 24\\
			\hline
			8 & $C(17,1,1,1,1,1,1,1)$ & 8  &0, 1, 2, 3, 16, 21, 22, 23, 24\\
			\hline
			8 & $C(3,3,3,3,3,3,3,3)$ & 9 & 0, 3, 6, 9, 12, 15, 18, 21, 24\\
			\hline
			6 & $C(2,3,4,4,5,6)$ & 10 & 0, 6, 10, 11, 12, 14, 15, 18, 19, 24\\
			\hline
			6 & $C(5,2,3,4,2,8)$ & 11 &0, 8, 10, 12, 15, 16, 17, 18, 19, 22, 24\\
			\hline
		\end{tabular}
	\end{center}
	\caption{Distinct eigenvalues of some $\mathcal{C}_{even}$-graphs.}
	\label{Lap_table1}
\end{table}


\section{Connectivity}\label{sec:conn}

The algebraic connectivity $a(K_n)$ of a complete graph $K_n$ is $n$, and we know from \cite{MF1,ifge} that this graph maximizes the algebraic connectivity in the set of all graphs with $n$ vertices. The vertex connectivity $\kappa=\kappa(G)$ is maximized by the same graph \cite{MF1} and it equals $n-1$. 
The classical result of Fiedler~\cite{MF1} states that 
\begin{equation*}\label{eq:Fiedler}a(G)\leq\kappa(G)\leq \delta(G),\end{equation*} 
holds for every connected non-complete graph, where $\delta$ denotes the minimum vertex degree. Moreover, we have pointed out in the first section that, in case of cographs, the first equality is attained. We easily obtain cographs that minimize the algebraic connectivity.

\begin{lemma}
	\label{Lap_th5}
	For any connected cograph $G$, $a(G)$ is an integer and $a(G)\geq1.$ If $G$ is a star $K_{1, n-1}, n\geq 3$, then $a(G)=1$. 
\end{lemma}
\begin{proof}
	First, $a(G)$ is an integer since the Laplacian eigenvalues of $G$ are integral. Since $G$ is connected, it holds $a(G)\geq 1$. For a star with at least three vertices, we have $a(K_{1, n-1})= \kappa(K_{1, n-1})=1$, which concludes the proof. 
\end{proof}

In what follows we determine connected non-complete $\mathcal{C}_{even}$-graphs with a fixed number of vertices  that maximize the algebraic connectivity, and we also determine connected $\mathcal{C}_{even}$-graphs with a fixed number of vertices that are not stars and  minimize the algebraic connectivity.

\begin{theorem}
	Among all connected non-complete $\mathcal{C}_{even}$-graphs with  $n$ vertices, the graph $C(n-2,2)$  maximizes the algebraic connectivity.
\end{theorem}
\begin{proof}
	Let $G\cong C(\alpha_1,\alpha_2,\ldots, \alpha_k)$ be a connected non-complete $\mathcal{C}_{even}$-graph. First note that for $k=2$, the algebraic connectivity is maximized by $C(n-2,2)$, along with $a(C(n-2,2))=n-2$, see Example~\ref{remark0}. For $k\geq 4$, Corollary~\ref{Lap_cor3} gives $a(G)=\min\{\alpha_k, n-\alpha_k\}< n-2$, since $\alpha_i\geq 1$ for all $i$.
\end{proof}

\begin{theorem}
	Among all connected $\mathcal{C}_{even}$-graphs that are non-isomorphic to the star and have $n$ vertices, the graph $C(\alpha_1,\alpha_2,\ldots, \alpha_{k-2},\alpha_{k-1},1),\ k\geq4$, minimizes the algebraic connectivity.
\end{theorem}
\begin{proof} Let $G\cong C(\alpha_1,\alpha_2,\ldots, \alpha_k)$ be the graph under consideration, and set first $k\geq 4$. By Corollary~\ref{Lap_cor3}, we have $a(G)=\min\{\alpha_k, n-\alpha_k\}$, and the desired result  follows. It remains to show that $a(G)>1$ holds for $k=2$. Applying Corollary~\ref{Lap_cor3}, under the restrictions given in the formulation of this statement, we obtain the required inequality.
\end{proof}

Observe that the complete split graph $G$ that is not a star minimizes the algebraic connectivity if  and only if $G\cong{C(2, n-2)}$, which is a direct consequence of Corollary~\ref{Lap_cor3}.

%
%
%

\section{Clique number}\label{sec:clique}

In this section we compute the clique number of a $\mathcal{C}_{even}$-graph and establish a relationship with the algebraic connectivity.


\begin{theorem}
	\label{Lap_th6}
	Let $G=C(\alpha_1,\alpha_2,\ldots,\alpha_k)$ be a $\mathcal{C}_{even}$-graph. Then its clique number is 
	$$\omega(G)=\max_{1\leq i\leq\frac{k}{2}}\Big\{ \alpha_{2i-1}+\frac{k}{2}-i+1\Big\}.$$
\end{theorem}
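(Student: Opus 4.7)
The plan is to reduce the clique problem to a combinatorial analysis of which partition classes can contribute to a clique. First, I would extract from the degree formulas for $d_{\alpha_i}$ (or equivalently from the recursive join-of-complements construction) the following partition-level adjacency pattern: for indices $i < j$, every vertex of $\pi_{\alpha_i}$ is adjacent to every vertex of $\pi_{\alpha_j}$ when $j$ is even, while the two partitions are completely non-adjacent when $j$ is odd. I would also verify that $\pi_{\alpha_i}$ induces a clique precisely when $i$ is odd, and an independent set when $i$ is even. All four parity cases can be checked against the stated formula for $d_{\alpha_i}$ in a few lines: for odd $i$, the neighbour list consists of $\pi_{\alpha_i}$ itself plus the even-indexed partitions with larger index, and for even $i$ it consists of every partition with smaller index together with the even-indexed partitions with larger index.

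Given these facts, consider any clique $S$ in $G$. Since two distinct odd-indexed partitions have no edges between them, $S$ meets at most one odd partition; call it $\pi_{\alpha_{2i-1}}$ if any. Since each even partition is an independent set, $S$ contains at most one vertex from each even partition. Any even partition met by $S$ must be fully adjacent to $\pi_{\alpha_{2i-1}}$, which by the first step forces its index to be at least $2i$; the eligible even partitions are thus $\pi_{\alpha_{2i}}, \pi_{\alpha_{2i+2}}, \ldots, \pi_{\alpha_k}$, exactly $\frac{k}{2}-i+1$ in number. Because any two even partitions are fully adjacent to each other, one representative from each can be combined freely with the chosen vertices of $\pi_{\alpha_{2i-1}}$. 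Thus
\[
|S| \leq \alpha_{2i-1} + \left(\frac{k}{2} - i + 1\right),
\]
with equality attained by taking all of $\pi_{\alpha_{2i-1}}$ together with one vertex from each eligible even partition.

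Finally, I would handle the degenerate case where $S$ meets no odd partition; then $|S| \leq \frac{k}{2}$, which is bounded above by the $i=1$ term $\alpha_1 + \frac{k}{2} \geq 1 + \frac{k}{2}$. Taking the maximum over $i \in \{1, 2, \ldots, \frac{k}{2}\}$ of the realizable clique sizes yields the stated formula. The one point requiring care is the clean extraction of the partition-level adjacency pattern from the individual degree formulas; one must distinguish ``fully adjacent'' from ``partially adjacent'' and confirm that the recursive $\mathcal{C}$-construction never produces the intermediate case. Once that is in place, the remainder is a short optimization argument.
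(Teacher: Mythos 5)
Your proposal is correct and follows essentially the same route as the paper: identify the partition-level adjacency pattern (odd classes are cliques, even classes are independent sets, and $\pi_i$ is fully joined to $\pi_j$ for $i<j$ exactly when $j$ is even), then observe that a maximum clique consists of one odd class $\pi_{2i-1}$ together with one vertex from each even class of index at least $2i$. Your write-up is in fact a bit more careful than the paper's on the upper-bound direction (ruling out two odd classes and handling the no-odd-class case explicitly), but it is the same argument.
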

\begin{proof}
	Consider the equitable partition $\pi$  of $G$ (defined in Section~\ref{sec:back}). The vertices of  $\pi_i$ form a clique if $i$ is odd, whereas, if $i$ is even then they form a co-clique (that is an edgeless graph). Further, every vertex of $\pi_{2i-1}$, $1\leq i\leq\frac{k}{2}$, is adjacent to every vertex of $\pi_{2j}$ whenever $j\geq i$. Thus, a clique in $G$ is formed by taking $\alpha_{2i-1}$  vertices of $\pi_{2i-1}$ together with one vertex from each of  $\pi_{2j}$, where $j\geq i$. Clearly, such a clique counts $\alpha_{2i-1} + \frac{k}{2}-i+1$ vertices. The maximum clique is obtained by taking the maximum over $i~(1\leq i\leq\frac{k}{2})$, which brings us to the desired result.
\end{proof}
The following corollaries are immediate applications of Theorem \ref{Lap_th6}.
\begin{corollary}\label{cor:cliq1}
	For the complete split graph $C(\alpha_1,\alpha_2)$, we have $\omega(C(\alpha_1,\alpha_2))=\alpha_1+1.$
\end{corollary}
\begin{corollary}
	For the antiregular graph $C(1,1,\ldots,1)$, we have   $\omega(C(1,1,\ldots,1))=\frac{k}{2}+1.$
\end{corollary}

We also emphasize the following result.

\begin{corollary}\label{cor:Omoj}For $k\geq 4$ and a $\mathcal{C}_{even}$-graph $G\cong C(\alpha_1, \alpha_2, \ldots, \alpha_k)$ with $n$ vertices, we have $\omega(G)\leq n-\alpha_k$, with equality if and only if $k=4$ and $\alpha_j=1$ for $2\leq j\leq 4$.
\end{corollary}

\begin{proof} For $1\leq i\leq \dfrac{k}{2}$, we have 
	\begin{equation}
		\label{eq:corO}
	n-\alpha_k=\sum_{j=1}^{k-1}\alpha_j\geq \alpha_{2i-1}+k-2\geq \alpha_{2i-1}+\frac{k}{2}-i+1,\end{equation}
	where the first inequality follows from $\alpha_j\geq 1$ for every $j$, and the second one follows from $k\geq 4$. Together with Theorem~\ref{Lap_th6}, this gives $\omega(G)\leq n-\alpha_k$. 
	
	If the equality holds, then we have equalities in \eqref{eq:corO}. The former one yields $\alpha_j=1$ for $j\neq 2i-1$. The latter one gives $k-2=\frac{k}{2}-i+1$, that is $k-6+2i=0$, which yields $k=4$ and $i=1$. Therefore, $G\cong C(\alpha_1, 1, 1 ,1)$. The opposite implication follows directly.
\end{proof}

In  case of a complete split graph, the clique number and the algebraic connectivity are computed easily, by employing Corollaries~\ref{Lap_cor3} and \ref{cor:cliq1}. The next result relates these invariants for the remaining $\mathcal{C}_{even}$-graphs.

\begin{theorem}
	Let $G\cong C(\alpha_1, \alpha_2, \ldots, \alpha_k)$ for $k\geq 4$. Then 
	$$\omega(G)\begin{cases}
		<a(G)&\text{if } \omega(G)<\alpha_k,\\ 
		=a(G)&\text{if } \omega(G)=\alpha_k,\\ 
		>a(G)&\text{if } \omega(G)>\alpha_k.
	\end{cases}$$	
\end{theorem}
\begin{proof} Assume that $\alpha_k<n-\alpha_k$. In this case, by Corollary~\ref{Lap_cor3}, we have $a(G)=\alpha_k$, which establishes the desired result. 
	
	For $\alpha_k\geq n-\alpha_k$, we have
	$$\omega(G)<n-\alpha_k=a(G)\leq \alpha_k,$$
	where the first inequality follows from Corollary~\ref{cor:Omoj}, and the remaining two follow from Corollary~\ref{Lap_cor3}. The last chain of inequalities gives the desired result.
\end{proof}

We proceed with particular cases that illustrate the result of the previous theorem.

\begin{corollary}
	For each of the following $\mathcal{C}_{even}$-graphs $G$, the inequality $\omega(G)>a(G)$ holds:
	\begin{enumerate}
		\item
		$G\cong C(\alpha_1, \alpha_2)$, {with $\alpha_2\geq2$}.
		\item
		$G\cong C(\underbrace{p, p, \ldots, p}_k)$, with $k\geq 4$.
		\item {$G\cong C(\underbrace{\alpha, \beta,\alpha, \beta, \ldots, \alpha , \beta}_{k\geq 4})$, with  $\alpha>\beta$}.
	\end{enumerate}
\end{corollary}

\begin{corollary}
	For  $G\cong C(\alpha,\alpha+1,\ldots , \alpha+k-1)$, {with $k\geq4$, we have $\omega(G)=a(G).$}
\end{corollary}

\begin{corollary}
	For  $G\cong C(\alpha,\alpha^2,\ldots, \alpha^k)$, with $k\geq4$ and $\alpha>1$, we have $\omega(G)<a(G).$
\end{corollary}

We conclude this section with a review of $\mathcal{C}_{even}$ graphs illustrating how the clique number and the algebraic connectivity may differ from one another. They are given in Table~\ref{Lap_table2}.

\begin{table}[h]
	\begin{center}
		\begin{tabular}{cccccc}
			\hline
			No. & $(n, k)$ & $G$& $\omega(G)$ & $a(G)$ & comparison \\ \hline
			1&$(57,2)$ & $C(24,33)$ & $25$ & $24$ & $\omega(G)>a(G)$\\
			\hline
			2&$(4,4)$ & $C(1,1,1,1)$ & $3$ & $1$  & $\omega(G)>a(G)$\\
			\hline
			3&$(14,6)$ & $C(5,1,1,1,1,5)$ & $8$ & $5$  & $\omega(G)>a(G)$ \\
			\hline
			4&$(231,6)$ & $C(32,59,26,19,66,29)$ & $65$ & $29$ & $\omega(G)>a(G)$ \\
			\hline
			5&$(35,4)$ & $C(6,13,8,8)$ & $9$ & $8$  & $\omega(G)>a(G)$\\
			\hline
			6&$(28,4)$ & $C(8,3,2,15)$ & $10$ & ${13}$ & $\omega(G)<a(G)$ \\
			\hline
			7&$(43,4)$ & $C(14,9,4,16)$ & $16$ & $16$  & $\omega(G)=a(G)$ \\
			\hline
			8&$(125,6)$ & $C(20,11,15,19,29,31)$ & $30$ & $31$ & $\omega(G)<a(G)$ \\
			\hline
			9&$(191,6)$ & $C(41,29,45,35,21,20)$ & $47$ & $20$   & $\omega(G)>a(G)$\\
			\hline
			10&$(221,6)$ & $C(35,20,31,40,45,50)$ & $46$ & $50$ & $\omega(G)<a(G)$ \\
			\hline
		\end{tabular}
	\end{center}
	\caption{A comparison between the clique number and the algebraic connectivity on some random $\mathcal{C}_{even}$-graphs}
	\label{Lap_table2}
\end{table}

\section{Acknowledgements}
The  research of Santanu Mandal is supported by the University Grants Commission of India under the beneficiary code BININ01569755. The research of Zoran Stani\' c is supported by the Fund of the Republic of Serbia; grant number 7749676:
Spectrally Constrained Signed Graphs with Applications in Coding Theory and Control Theory --
	SCSG-ctct.

\section{Statements and Declarations} \textbf{Competing Interests:} The authors made no mention of any potential conflicts of interest.

\end{document}